\numberwithin{equation}{section}
\theoremstyle{plain}
\newtheorem{theorem}{Theorem}[section]
\newtheorem{proposition}[theorem]{Proposition}
\newtheorem{lemma}[theorem]{Lemma}
\theoremstyle{definition}
\newtheorem{remark}[theorem]{Remark}
\def\now{%
\minute=\time%
\hour=\time \divide \hour by 60%
\hourMins=\hour \multiply\hourMins by 60%
\advance\minute by -\hourMins%
\zeroPadTwo{\the\hour}:\zeroPadTwo{\the\minute}%
}
\def\zeroPadTwo#1{\ifnum #1<10 0\fi#1}
\renewcommand{\cite}{\citet*}
\def\^#1{\ifmmode {\mathaccent"705E #1} \else {\accent94 #1} \fi}
\def\~#1{\ifmmode {\mathaccent"707E #1} \else {\accent"7E #1} \fi}
\def\*#1{#1^\ast}
\edef\-#1{\noexpand\ifmmode {\noexpand\bar{#1}} \noexpand\else \-#1\noexpand\fi}
\def\>#1{\vec{#1}}
\def\.#1{\dot{#1}}
\def\atop{\@@atop}
\def\*#1{\mathscr{#1}}
\renewcommand{\leq}{\leqslant}
\renewcommand{\geq}{\geqslant}
\renewcommand{\phi}{\varphi}
\newcommand{\eq}{\eqref}
\newcommand{\IE}{\mathbbm{E}}
\newcommand{\Var}{\mathop{\mathrm{Var}}\nolimits}
\newcommand{\Cov}{\mathop{\mathrm{Cov}}}
\def\be#1{\begin{equation*}#1\end{equation*}}
\def\ben#1{\begin{equation}#1\end{equation}}
\def\besn#1{\begin{equation}\begin{split}#1\end{split}\end{equation}}
\def\beqn#1\eeqn{\begin{align}#1\end{align}}
\def\beq#1\eeq{\begin{align*}#1\end{align*}}
\def\E{{\IE}}
\def\I{{\rm I}}
\renewcommand\section{\@startsection {section}{1}{\z@}%
{-3.5ex \@plus -1ex \@minus -.2ex}%
{1.3ex \@plus.2ex}%
{\center\small\sc\mathversion{bold}\MakeUppercase}}
\def\subsection#1{\@startsection {subsection}{2}{0pt}%
{-3.5ex \@plus -1ex \@minus -.2ex}%
{1ex \@plus.2ex}%
{\bf\mathversion{bold}}{#1}}
\def\subsubsection#1{\@startsection{subsubsection}{3}{0pt}%
{\medskipamount}%
{-10pt}%
{\normalsize\itshape}{\kern-2.2ex. #1.}}
\def\blfootnote{\xdef\@thefnmark{}\@footnotetext}
\begin{document}

\title{\sc\bf\large\MakeUppercase{A universal error bound in the CLT for counting monochromatic edges in uniformly colored graphs}}
\author{\sc Xiao Fang \footnote{Department of Statistics and Applied Probability, 
Block S16, Level 7, 6 Science Drive 2,
Faculty of Science, 
National University of Singapore,
Singapore 117546}}
\date{\it National University of Singapore and Stanford University}
\maketitle

\begin{abstract} 
Let $\{G_n: n\geq 1\}$ be a sequence of simple graphs. Suppose $G_n$ has $m_n$ edges and each vertex of $G_n$ is colored independently and uniformly at random with $c_n$ colors. Recently, Bhattacharya, Diaconis and Mukherjee (2013) proved universal limit theorems for the number of monochromatic edges in $G_n$. Their proof was by the method of moments, and therefore was not able to produce rates of convergence. By a non-trivial application of Stein's method, we prove that there exists a universal error bound for their central limit theorem. The error bound depends only on $m_n$ and $c_n$, regardless of the graph structure.

\end{abstract}

\vspace{9pt}
\noindent {\it Key words and phrases:}
Stein's method; normal approximation; rate of convergence; monochromatic edges.
\par

\vspace{9pt}
\noindent {\it AMS 2010 Subject Classification:}
05C15, 60F05
\par

\section{Introduction}

Let $\{G_n: n\geq 1\}$ be a sequence of simple graphs, that is, graphs that contain no loops and no multiple edges. Suppose $G_n$ has $m_n$ edges and each vertex of $G_n$ is colored independently and uniformly at random with $c_n$ colors. Let $Y_n$ be the number of monochromatic edges in $G_n$. Using the coupling approach in Stein's method for Poisson approximation, \cite{BaHoJa92} (page 105, Theorem 5.G) proved that 
\ben{\label{13}
d_{TV}(\mathcal{L}(Y_n), Poi(\frac{m_n}{c_n}))\leq \frac{\sqrt{8 m_n}}{c_n}
}
where $d_{TV}$ denotes the total variation distance and $Poi(\lambda)$ denotes the Poisson distribution with mean $\lambda$.
The bound \eq{13} implies that 
if $c_n\rightarrow \infty$ and $m_n/c_n \rightarrow \lambda>0$, the distribution of $Y_n$ converges to the Poisson distribution with mean $\lambda$. Recently, \cite{BhDiMu13} reproved this Poisson limit theorem by the method of moments. By the same method, they also showed that in the case $c_n\rightarrow \infty$ and $m_n/c_n \rightarrow \infty$, the distribution of $W_n$, after proper standardization, converges to the standard normal distribution.
These limit theorems were called \emph{universal} limit theorems because they do not require any assumption on the graph structure.
For applications of this and related problems, we refer to \cite{BhDiMu13} and the references therein.

In this note, we prove the following result.
\begin{theorem}\label{t2}
Let $Y$ be the number of monochromatic edges in a simple graph with $m$ edges where each vertex is colored independently and uniformly at random with $c$ colors.
Let
\be{
W=\frac{(Y-\frac{m}{c})}{\sqrt{\frac{m}{c}(1-\frac{1}{c})}}.
}
We have
\ben{\label{t2-1}
d_{W}(\mathcal{L}(W), N(0,1))\leq 
\frac{3}{2}\sqrt{\frac{c}{m}}+\frac{5\sqrt{2}}{\sqrt{c}}+\frac{1}{\sqrt{\pi}}\frac{2^{7/4}}{m^{1/4}}.
}
where $d_W$ denotes the Wasserstein distance and $N(0,1)$ denotes the standard normal distribution.
\end{theorem}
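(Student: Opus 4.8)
The plan is to prove \eqref{t2-1} by Stein's method for normal approximation, realised through a \emph{size-biased coupling} of $Y$. Write $Y=\sum_{e\in E}\xi_e$, where $E$ is the edge set, $\xi_e=\mathbbm{1}[X_u=X_v]$ for $e=\{u,v\}$, and $X_v$ is the colour of vertex $v$, so $\IE\xi_e=1/c$. The model has a structural feature I will use repeatedly: for $e\ne f$ the indicators $\xi_e$ and $\xi_f$ are \emph{uncorrelated} --- if $e,f$ are vertex-disjoint this is plain independence, and if they share a vertex then $\xi_e\xi_f=1$ forces three colours to coincide, so $\IE[\xi_e\xi_f]=1/c^2=\IE\xi_e\,\IE\xi_f$. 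Consequently $\IE Y=m/c$ and $\Var Y=\tfrac mc(1-\tfrac1c)$ \emph{exactly}, so the standardisation in the statement is the natural one.

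For the coupling I would pick an edge $e=\{u,v\}$ uniformly at random (correct because all $\IE\xi_e$ are equal), let $b$ be the endpoint of smaller degree and $a$ the other, recolour $b$ with the colour of $a$, and take $Y^s$ to be the resulting number of monochromatic edges; then $Y^s$ has the $Y$-size-biased law and
\be{
Y^s-Y=(1-\xi_e)+\sum_{f\ni b,\ f\ne e}\bklr{\mathbbm{1}[X_a=X_{w(f)}]-\xi_f},
}
where $w(f)$ denotes the endpoint of $f$ other than $b$. Choosing $b$ to be the low-degree endpoint is essential: it gives $|Y^s-Y|\le\deg b$, and the graph-independent estimate $\sum_{\{u,v\}\in E}\min(\deg u,\deg v)=\bigo(m^{3/2})$ then controls the coupling (split according to whether the smaller degree is at most $\sqrt{2m}$, using $|\{v:\deg v\ge k\}|\le 2m/k$). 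The opposite choice could make the jump as large as order $\sqrt{m/c}$ --- witness a star, where recolouring the centre changes $Y$ drastically --- so no universal bound would then be possible.

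With $W=(Y-\tfrac mc)/\sigma$ and $\sigma^2=\tfrac mc(1-\tfrac1c)$, the standard size-biased coupling bound for the Wasserstein distance gives
\be{
\dw(\law(W),\N(0,1))\le\sqrt{\tfrac2\pi}\,\frac{m/c}{\sigma^2}\sqrt{\Var\bklr{\IE[Y^s-Y\mid Y]}}+\frac{m/c}{\sigma^3}\,\IE\bklr{(Y^s-Y)^2},
}
the numerical constants stemming from the Stein bounds $\|f_h'\|_\infty\le\sqrt{2/\pi}$ and $\|f_h''\|_\infty\le2$ valid for $1$-Lipschitz $h$. For the last term I would square $Y^s-Y$: pairwise uncorrelatedness annihilates every cross term, leaving $\IE[(Y^s-Y)^2]=(1-\tfrac1c)\bklr{1+\tfrac2{cm}\sum_e(\deg b_e-1)}$, and the $\bigo(m^{3/2})$ degree sum converts this into a contribution of order $\sqrt{c/m}$ (from the forced edge, the ``$1$'') and one of order $1/\sqrt c$ (from the length-two paths rooted at $b$); tracking constants, these become the $\tfrac32\sqrt{c/m}$ and $5\sqrt2/\sqrt c$ terms of \eqref{t2-1}. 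For the first term I would condition on the full colouring $X$ and use $\Var\bklr{\IE[Y^s-Y\mid Y]}\le\Var\bklr{\IE[Y^s-Y\mid X]}$; here $\IE[Y^s-Y\mid X]=(1-Y/m)+\tfrac1m\sum_e\tau_e$ with $\tau_e=\sum_{f\ni b_e,\,f\ne e}\bklr{\mathbbm{1}[X_{a_e}=X_{w(f)}]-\xi_f}$, and the $1-Y/m$ piece contributes merely $\Var Y/m^2$, negligible.

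The hard part will be the remaining variance $\Var\bklr{\sum_e\tau_e}$, which has to be shown to be $\bigo(m^{3/2})$ \emph{uniformly over all simple graphs}, and this is the source of the $\tfrac1{\sqrt\pi}2^{7/4}m^{-1/4}$ term. Each $\tau_e$ is a sum, over the edges $f$ at $b_e$ other than $e$, of elementary terms $\mathbbm{1}[X_{a_e}=X_{w(f)}]-\mathbbm{1}[X_{b_e}=X_{w(f)}]$, and each such term has zero conditional expectation given any single one of its three vertex colours, so two of them are uncorrelated as soon as they share at most one vertex; when they share two or three vertices the covariance, though nonzero, is $\bigo(1/c)$ and of either sign. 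In particular the terms forming a single $\tau_e$ are mutually uncorrelated, so the ``diagonal'' part $\sum_e\Var\tau_e=\tfrac2c(1-\tfrac1c)\sum_e(\deg b_e-1)$ is $\bigo(m^{3/2}/c)$ by the degree-sum bound; the cross terms $\sum_{e\ne e'}\Cov(\tau_e,\tau_{e'})$ are the technical core. A bare count of overlapping triples is too lossy (it overcounts by a factor $m^{1/2}$ on dense graphs), so one must keep track of the cancellations among the covariances --- most conveniently by passing to the centred indicators $Z_{vk}=\mathbbm{1}[X_v=k]-1/c$, through which $\mathbbm{1}[X_p=X_q]=1/c+\sum_k Z_{pk}Z_{qk}$ and $\sum_e\tau_e$ becomes a quadratic form whose variance is an explicit sum of products of adjacency-matrix entries, bounded using degrees, codegrees and the $\min$-degree sum. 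Feeding the resulting estimate into the first term above and collecting the three bounds, with care for the numerical constants, yields \eqref{t2-1}.
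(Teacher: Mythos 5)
Your overall strategy (a size-biased coupling for $Y$ fed into the standard Wasserstein bound, with the low-degree endpoint recoloured so that the coupling is controlled by $\sum_e\min(\deg u,\deg v)=\bigo(m^{3/2})$) is legitimate and genuinely different from the paper, which instead works with the standardized sum $W=\sum_i X_i$, the local decomposition $W=W_i+D_i$ over the edges at the lower-degree endpoint, and a four-term Taylor expansion of the Stein identity. Your treatment of the second term is fine: the computation $\E(Y^s-Y)^2=(1-\tfrac1c)\bigl(1+\tfrac2{cm}\sum_e(\deg b_e-1)\bigr)$ is correct and does produce contributions of orders $\sqrt{c/m}$ and $1/\sqrt c$. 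The genuine gap is in the first term, which you yourself flag as the technical core but never carry out, and whose stated target is in fact false. Because the indicators $\mathbbm{1}[X_p=X_q]$ over distinct vertex pairs are exactly pairwise uncorrelated, one has the identity
\be{
\Var\Bigl(\sum_e\tau_e\Bigr)=\frac1c\Bigl(1-\frac1c\Bigr)\sum_{\{p,q\}}\gamma_{pq}^2,
}
where $\gamma_{pq}$ is the net coefficient of $\mathbbm{1}[X_p=X_q]$ (number of configurations $(e,f)$ with outer pair $\{a_e,w(f)\}=\{p,q\}$ minus the number with $f=\{p,q\}$). On the complete graph $K_n$ with any fixed tie-breaking rule these coefficients do not cancel: one gets $\gamma_{pq}\asymp p+q-n$ (labels), hence $\sum_{\{p,q\}}\gamma_{pq}^2\asymp n^4\asymp m^2$ and $\Var(\sum_e\tau_e)\asymp m^2/c$, which exceeds your claimed uniform bound $\bigo(m^{3/2})$ whenever $c\ll\sqrt m$. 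So the route you describe to the $\tfrac1{\sqrt\pi}2^{7/4}m^{-1/4}$ term collapses: in your decomposition the variance term is genuinely of order $1/\sqrt c$ on dense graphs with few colours, not $m^{-1/4}$, and even that requires a universal bound of the type $\sum_{\{p,q\}}\gamma_{pq}^2=\bigo(m^2)$ which you do not prove (the negative part can be handled via $M_f\le 2\sqrt{2m}$ and $\sum_f M_f\le\sqrt2m^{3/2}$, but the positive part, indexed by pairs at distance two, needs its own argument). At best you would obtain a bound of the form $C(\sqrt{c/m}+1/\sqrt c)$ with unspecified constants; the specific inequality \eqref{t2-1} does not follow from the sketch.

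For comparison, the paper gets the $m^{-1/4}$ term in a different place and by a different mechanism: the troublesome quantity is $R_4=\E f'(W)\sum_i X_i(D_i-X_i)$, bounded in $L^2$ using the observation that $\Cov(X_iX_j,X_kX_l)=0$ unless $\{i,j\}=\{k,l\}$ or the edges form a triangle, together with the universal triangle bound $\#(\Delta)\le\sqrt2\,m^{3/2}/3$; since the $X_i$ are standardized, each surviving covariance is of size $1/m^2$ and the diagonal count $\sqrt2\,m^{3/2}$ gives exactly $m^{-1/4}$, with no graph-dependent cancellation argument needed. Two smaller points: you use $\|f_h''\|\le 2$, whereas the stated constants in \eqref{t2-1} rely on the sharper bound $\|f_h''\|\le 1$ (Rai\v{c}, D\"obler) used in the paper; and your constant-tracking for the second term ("these become $\tfrac32\sqrt{c/m}$ and $5\sqrt2/\sqrt c$") is asserted rather than checked. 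To turn your approach into a complete proof you would need to prove a universal estimate on $\sum_{\{p,q\}}\gamma_{pq}^2$ and then accept a bound with different constants (and a different shape, with no $m^{-1/4}$ term) than the one in the statement.
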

The bound \eq{t2-1} provides a universal error bound for the central limit theorem for $W_n$ as $c_n\rightarrow \infty$ and $m_n/c_n \rightarrow \infty$. A corollary for fixed $c_n$ is also obtained in Remark \ref{r1}. The bound \eq{t2-1} is obtained by a non-trivial application of Stein's method for normal approximation.
Stein's method was introduced by \cite{St72} for normal approximation. Stein's method for Poisson approximation was first studied by \cite{Ch75} and popularized by \cite{ArGoGo90}. We refer to \cite{BaCh05} for an introduction to Stein's method. 
Stein's method has been widely used to prove limit theorems with error bounds in graph theory.
For example, \cite{ArGoGo90} and \cite{ChDiMe05} used Stein's method to prove Poisson limit theorems for monochromatic cliques in a uniformly colored complete graph.
\cite{CeFo06} considered more general monochromatic subgraphs counts when the distribution of colors was exchangeable. 
\cite{JaNo91} studied the asymptotic distribution of the number of copies of a given graph in various random graph models.
All of these results are obtained by exploiting the local dependence structure within random variables.
In addition to the local dependence structure, we also exploit the uncorrelatedness within $W_n$. This technique of exploiting the uncorrelatedness within random variables was also used in \cite{FaRo14} to obtain rates of convergence for the central limit theorem for subgraph counts in random graphs.

In the next section, we give the proof of Theorem \ref{t2}.

\section{Normal approximation}

Let $G=(V(G), E(G))$ be a simple undirected graph, where $V(G)$ is the vertex set and $E(G)$ is the edge set. Let $m=|E(G)|$ be the number of edges of $G$. We color each vertex of $G$ independently and uniformly at random with $c\geq 2$ colors. Formally, label the vertices of $G$ by $\{v_1,\dots, v_{|V(G)|}\}$ and denote the color of the vertex $v_i$ by $\xi_{v_i}$.
Label the edges of $G$ by $\{1,\dots, m\}$. 
For each edge $i$, we denote by $v_{i1}, v_{i2}$ the two vertices it connects, i.e., $i=(v_{i1}, v_{i2})$.
Without loss of generality, assume $deg(v_{i1})\leq deg(v_{i2})$ where $deg(v)$ denotes the degree of vertex $i$.
Using the above notation, the standardized number of monochromatic edges can be expressed as
\ben{\label{14}
W=\sum_{i=1}^n X_i:=\sum_{i=1}^m \frac{(\I(\xi_{v_{i1}}=\xi_{v_{i2}} )-\frac{1}{c})}{\sqrt{\frac{m}{c}(1-\frac{1}{c})}}.
}
Observing that $X_i$ and $X_j$ are uncorrelated if $i\ne j$, we have $\E W=0, \Var (W)=1$.

We will need the following lemmas in the proof of Theorem \ref{t2}.

\begin{lemma}\label{l1}
We have the following bounds on the moments of $X_i$:
\ben{\label{l1-1}
\E|X_i|\leq \frac{2}{\sqrt{mc}};\quad \E X_i^2=\frac{1}{m};\quad \E|X_i|^3 \leq \frac{\sqrt{c}}{m^{3/2}}.
}
\end{lemma}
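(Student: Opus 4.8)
The plan is to reduce all three estimates to the moments of a single Bernoulli variable. Set $B_i := \I(\xi_{v_{i1}}=\xi_{v_{i2}})$. The two endpoints of edge $i$ are distinct vertices, so their colors are independent and uniform on $c$ values, giving $\IP(B_i=1)=1/c$; thus $B_i\sim\Be(1/c)$. Writing $\sigma:=\sqrt{\frac{m}{c}(1-\frac1c)}$, we have $X_i=(B_i-\frac1c)/\sigma$ and $\sigma^2=m\cdot\frac1c(1-\frac1c)=m\,\Var(B_i)$, so the middle identity is immediate: $\E X_i^2=\Var(B_i)/\sigma^2=1/m$.

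For $\E|X_i|$, note that $B_i-\frac1c$ takes the value $1-\frac1c$ with probability $\frac1c$ and the value $-\frac1c$ with probability $1-\frac1c$, so $\E|B_i-\frac1c|=2\cdot\frac1c(1-\frac1c)$; hence $\E|X_i|=\frac{2}{\sigma}\cdot\frac1c(1-\frac1c)=\frac{2}{\sqrt m}\sqrt{\frac1c(1-\frac1c)}\leq\frac{2}{\sqrt{mc}}$ after bounding $1-\frac1c\leq1$.

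For $\E|X_i|^3$, the only point needing care is obtaining the constant $\sqrt c$ rather than a larger one: since $c\geq2$ forces $\frac1c\leq1-\frac1c$, we have the pointwise bound $|B_i-\frac1c|\leq1-\frac1c$, whence $|B_i-\frac1c|^3\leq(1-\frac1c)(B_i-\frac1c)^2$. Taking expectations, $\E|B_i-\frac1c|^3\leq(1-\frac1c)\Var(B_i)=\frac1c(1-\frac1c)^2$, and dividing by $\sigma^3=(m\cdot\frac1c(1-\frac1c))^{3/2}$ gives $\E|X_i|^3\leq\frac{1}{m^{3/2}}\sqrt{(1-1/c)/(1/c)}=\frac{\sqrt{c-1}}{m^{3/2}}\leq\frac{\sqrt c}{m^{3/2}}$.

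There is no genuine obstacle here: the lemma is an elementary computation with a Bernoulli variable. The only thing to watch is that each of the three bounds is taken tight enough — in particular using $|B_i-\frac1c|\leq1-\frac1c$ rather than merely $\leq1$ in the third moment — so that the constants propagate correctly into the final error bound of Theorem~\ref{t2}.
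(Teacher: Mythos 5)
Your computation is correct and is exactly the elementary Bernoulli-moment calculation that the paper omits (its proof of Lemma~\ref{l1} is simply ``elementary and therefore omitted''); each of the three bounds checks out, including the sharper pointwise bound $|B_i-\tfrac1c|\leq 1-\tfrac1c$ for $c\geq 2$ that yields $\sqrt{c-1}\leq\sqrt{c}$ in the third moment. Nothing is missing.
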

\begin{proof}
The proof is elementary and therefore omitted.
\end{proof}

\begin{lemma}[Page 37 of \cite{BaHoJa92}]\label{l2}
For each edge $i=(v_{i1}, v_{i2})$, define $d_i=deg(v_{i1})\wedge deg(v_{i2})$. We have
\ben{\label{l2-1}
\sum_{i=1}^m d_i \leq \sqrt{2} m^{3/2}.
}
\end{lemma}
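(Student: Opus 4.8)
To prove Lemma~\ref{l2}, the plan is to replace the minimum by a geometric mean and then estimate the resulting sum by a Cauchy--Schwarz argument applied to the inner neighbour-sum rather than globally. Since $a\wedge b\leq\sqrt{ab}$ for nonnegative $a,b$, we have $d_i=deg(v_{i1})\wedge deg(v_{i2})\leq\sqrt{deg(v_{i1})\,deg(v_{i2})}$ for each edge $i$, so it suffices to prove the (a priori stronger) inequality
\be{
\sum_{(u,v)\in E(G)}\sqrt{deg(u)\,deg(v)}\;\leq\;\sqrt2\,m^{3/2},
}
the sum being over the unordered edges of $G$. I would rewrite the left-hand side as $\tfrac12\sum_{u\in V(G)}\sqrt{deg(u)}\sum_{v\sim u}\sqrt{deg(v)}$, that is, as a sum over ordered incident pairs with each edge contributing twice ($v\sim u$ denoting adjacency).

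The main step is then a \emph{single} application of Cauchy--Schwarz to the inner sum: since $u$ has exactly $deg(u)$ neighbours, $\sum_{v\sim u}\sqrt{deg(v)}\leq(deg(u))^{1/2}\big(\sum_{v\sim u}deg(v)\big)^{1/2}$, whence the edge sum is at most $\tfrac12\sum_{u\in V(G)}deg(u)\big(\sum_{v\sim u}deg(v)\big)^{1/2}$. Now I would use the crude global bound $\sum_{v\sim u}deg(v)\leq\sum_{w\in V(G)}deg(w)=2m$, valid for every $u$, together with $\sum_{u\in V(G)}deg(u)=2m$ once more, to conclude that this is at most $\tfrac12(2m)^{1/2}\cdot 2m=\sqrt2\,m^{3/2}$, which is the desired bound.

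The one point that needs care is precisely this last estimate. It is tempting to apply Cauchy--Schwarz a second time to the outer sum $\sum_u deg(u)\big(\sum_{v\sim u}deg(v)\big)^{1/2}$, but that only reproduces $\sum_{(u,v)\in E(G)}\sqrt{deg(u)\,deg(v)}\leq\tfrac12\sum_v deg(v)^2$, which is far too weak as soon as a single vertex carries a large share of the degree (for a star with $m$ edges, $\sum_v deg(v)^2\asymp m^2\gg m^{3/2}$). Bounding $\sum_{v\sim u}deg(v)$ instead by the \emph{total} degree $2m$ is exactly the move that uses that the neighbours of a vertex cannot all have large degree simultaneously, and it delivers both the exponent $3/2$ and the sharp constant $\sqrt2$ --- which is asymptotically attained on complete graphs, where every $d_i$ equals $k-1\sim\sqrt{2m}$ and the two sides of \eqref{l2-1} agree to leading order. (An alternative is the layer-cake identity $\sum_i d_i=\sum_{t\geq1}\#\{i:d_i\geq t\}$ combined with $\#\{i:d_i\geq t\}\leq\min(m,2m^2/t^2)$ --- the second bound because both endpoints of such an edge lie among the at most $2m/t$ vertices of degree $\geq t$ --- and optimizing the cut-off at $t\sim\sqrt{2m}$; this also gives a bound of order $m^{3/2}$, though the constant comes out cleaner via the geometric-mean route.)
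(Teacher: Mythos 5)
Your argument is correct: $d_i\leq\sqrt{\deg(v_{i1})\deg(v_{i2})}$, one Cauchy--Schwarz on the neighbour sum, and the crude bound $\sum_{v\sim u}\deg(v)\leq 2m$ do combine to give exactly $\tfrac12(2m)^{1/2}\cdot 2m=\sqrt2\,m^{3/2}$, and your side remarks (the star showing that a second, global Cauchy--Schwarz would only yield $\tfrac12\sum_v\deg(v)^2$, and the near-sharpness on complete graphs) are accurate. Note, however, that the paper itself supplies no proof of this lemma --- it is quoted from page 37 of Barbour, Holst and Janson (1992) --- so there is no in-paper argument to compare against; your proof is essentially the standard one from that reference, which can be phrased equivalently as Cauchy--Schwarz over the edges combined with $\sum_{(u,v)\in E(G)}\deg(u)\deg(v)\leq\tfrac12\bigl(\sum_u\deg(u)\bigr)^2=2m^2$, giving the same constant $\sqrt2$. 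Your layer-cake alternative is also sound (with a slightly worse constant), so the proposal stands as a complete, self-contained justification of \eqref{l2-1}.
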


\begin{lemma}[Lemma 3.2 of \cite{BhDiMu13}]\label{l3}
The number of triangles, denoted by $\#(\Delta)$, in $G$ is bounded by $\sqrt{2} m^{2/3}/3$.
\end{lemma}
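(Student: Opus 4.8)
The plan is to count triangle–edge incidences and then feed the result straight into Lemma~\ref{l2}. (The exponent in the stated bound should read $3/2$ rather than $2/3$: the correct value is $\sqrt2\,m^{3/2}/3$, which is asymptotically attained by the complete graph and already fails at $K_4$ if one reads $m^{2/3}$; it is this $m^{3/2}$ bound that the argument below produces and that is used elsewhere.) The whole proof reduces to a single double-counting identity together with an elementary estimate of how many triangles one edge can lie in.

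First I would set up the incidence count. Every triangle consists of exactly three edges, so, writing $t_i$ for the number of triangles containing edge $i=(v_{i1},v_{i2})$, summing over edges counts each triangle three times:
\[
3\,\#(\Delta)=\sum_{i=1}^m t_i .
\]
Next I would bound each $t_i$ by the degree data already present in Lemma~\ref{l2}. A triangle through edge $i$ is specified by a vertex adjacent to both endpoints $v_{i1}$ and $v_{i2}$; any such common neighbour lies among the $\deg(v_{i1})-1$ neighbours of $v_{i1}$ other than $v_{i2}$, and likewise among the $\deg(v_{i2})-1$ neighbours of $v_{i2}$, so
\[
t_i\le \deg(v_{i1})\wedge\deg(v_{i2})=d_i .
\]

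Finally I would combine these and invoke Lemma~\ref{l2}:
\[
3\,\#(\Delta)=\sum_{i=1}^m t_i\le\sum_{i=1}^m d_i\le\sqrt2\,m^{3/2},
\]
whence $\#(\Delta)\le\sqrt2\,m^{3/2}/3$. I do not anticipate any genuine obstacle; the only point needing a moment's care is the bound $t_i\le d_i$, where one must count common neighbours against the \emph{smaller} endpoint degree (so as to match the definition of $d_i$ in Lemma~\ref{l2}) rather than against a sum or product of the two degrees. Everything else is the direct double count followed by the ready-made degree-sum estimate.
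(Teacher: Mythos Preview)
Your argument is correct, including the identification of the typo: the exponent in the lemma must be $3/2$, not $2/3$, and indeed the paper's later computation of $|R_4|$ only works with the $\sqrt{2}\,m^{3/2}/3$ bound. The paper does not supply its own proof of this lemma at all --- it simply cites it from \cite{BhDiMu13} --- so there is nothing to compare against; your double-counting of edge--triangle incidences together with the estimate $t_i\le d_i$ and Lemma~\ref{l2} is exactly the natural route and is essentially how the cited source proceeds as well.
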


The following proposition is the key ingredient in proving Theorem \ref{t2}.

\begin{proposition}\label{p1}
For any function $f$ with bounded first and second derivatives, we have with $W$ defined in \eq{14},
\ben{\label{p1-1}
|\E f'(W)-\E W f(W)|\leq ||f''||(\frac{3}{2}\sqrt{\frac{c}{m}}+\frac{5\sqrt{2}}{\sqrt{c}})
+||f'|| \frac{2\cdot 2^{1/4}}{m^{1/4}}
}
where $||g||:=\sup_x |g(x)|$ for any function $g$.
\end{proposition}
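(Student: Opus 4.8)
\emph{Proof strategy.}
The plan is to run Stein's method for the normal through a local--dependence decomposition of $W=\sum_{i=1}^m X_i$, exploiting two structural features of the model beyond the usual dependency--graph bookkeeping. First, $X_i$ is a function of $(\xi_{v_{i1}},\xi_{v_{i2}})$ only and $\E[X_i\mid(\xi_v)_{v\neq v_{i1}}]=0$; hence $\E[X_i\,\Psi]=0$ whenever $\Psi$ depends only on $(\xi_v)_{v\neq v_{i1}}$, and also $\E[X_i^2\mid(\xi_v)_{v\neq v_{i1}}]=1/m$. Second, $X_i$ and $X_j$ are uncorrelated for \emph{all} $i\neq j$, so $\Var W=\sum_i\E X_i^2=1$. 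Writing $\wh W_i:=\sum_{j\ni v_{i1}}X_j$ for the sum over edges meeting the lower--degree endpoint $v_{i1}$ of $i$, the random variable $W-\wh W_i$ is a function of $(\xi_v)_{v\neq v_{i1}}$, so $\E[X_i f(W-\wh W_i)]=0$ and
\be{
\E f'(W)-\E W f(W)=\E f'(W)-\sum_{i=1}^m\E\bigl[X_i\bigl(f(W)-f(W-\wh W_i)\bigr)\bigr].
}

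I would then Taylor--expand each summand about $W-\wh W_i$, writing $f(W)-f(W-\wh W_i)=\wh W_i f'(W-\wh W_i)+\tfrac12 f''(\theta_i)\wh W_i^2$ and $\wh W_i=X_i+\overline W_i$ with $\overline W_i=\sum_{j\ni v_{i1},\,j\neq i}X_j$. This produces three pieces: (a) the diagonal term $\sum_i\E[X_i^2 f'(W-\wh W_i)]$, which by $\E[X_i^2\mid(\xi_v)_{v\neq v_{i1}}]=1/m$ equals $\tfrac1m\sum_i\E f'(W-\wh W_i)$ and so agrees with $\E f'(W)$ up to $\tfrac{\|f''\|}{m}\sum_i\E|\wh W_i|\le\tfrac{\|f''\|}{m^{3/2}}\sum_i\sqrt{d_i}$, estimated with Lemma~\ref{l2} and Cauchy--Schwarz; (b) the off--diagonal term $\sum_i\E[X_i\overline W_i f'(W-\wh W_i)]$; and (c) the second--order remainder, at most $\tfrac12\|f''\|\sum_i\E[|X_i|\wh W_i^2]$. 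For (c), expanding $\wh W_i^2$ and conditioning on $\xi_{v_{i1}}$ kills all cross terms ($\E[|X_i|X_iX_j]=0$ and $\E[|X_i|X_jX_k]=0$ for distinct $j,k\ni v_{i1}$), while $\E[|X_i|X_j^2]=\E|X_i|\,\E X_j^2$; Lemmas~\ref{l1}--\ref{l2} then bound (c) by $\|f''\|$ times a combination of $\sqrt{c/m}$ and $c^{-1/2}$.

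The core of the argument is the off--diagonal term (b). Conditioning on $\xi_{v_{i1}}$ collapses $X_i\overline W_i$ to an explicit centred statistic $Q_i:=\E[X_i\overline W_i\mid(\xi_v)_{v\neq v_{i1}}]$, proportional to $\sum_{j\ni v_{i1},\,j\neq i}\bigl(\I(\xi_{v_{i2}}=\xi_{u_j})-\tfrac1c\bigr)$ with $u_j$ the far endpoint of $j$, so that (b) $=\sum_i\E[Q_i f'(W-\wh W_i)]$ with $\E Q_i=0$. The idea is to localise a second time: since $Q_i$ depends only on the colours at $v_{i2}$ and at the $u_j$'s, deleting from $W$ all edges incident to those vertices gives $W^{[i]}$ independent of $Q_i$, hence $\E[Q_i f'(W^{[i]})]=0$; a further expansion of $f'(W-\wh W_i)-f'(W^{[i]})$ then isolates, on the one hand, the contribution of edges $(v_{i2},u_j)$ --- i.e.\ of triangles through $i$, whose total count is $3\,\#(\Delta)$ and is controlled by Lemma~\ref{l3} --- giving a $\|f''\|$ term of order $c^{-1/2}$, and on the other hand a genuine fluctuation term that, being $\|f'\|$--sized, produces the $\|f'\|\,m^{-1/4}$ contribution to \eqref{p1-1}. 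Collecting (a), (b), (c) gives \eqref{p1-1}.

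I expect step (b) to be the real obstacle. A naive dependency neighbourhood of an edge has size of order the degrees of its two endpoints, and sums such as $\sum_i\deg(v_{i2})$ and $\sum_v\deg(v)^2$ are not controllable by $m$ alone; it is precisely the choice of the lower--degree endpoint, the conditioning identities, and the secondary localisation that force every surviving combinatorial sum down to $\sum_i d_i\le\sqrt2\,m^{3/2}$ (Lemma~\ref{l2}) or to $\#(\Delta)$ (Lemma~\ref{l3}). Pushing these reductions through the successive layers, and splitting each error cleanly between $\|f'\|$ and $\|f''\|$ so as to reach the explicit constants in \eqref{p1-1}, is where the work concentrates.
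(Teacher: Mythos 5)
Your frame---localising at the lower-degree endpoint $v_{i1}$, using $\E[X_i\mid(\xi_v)_{v\neq v_{i1}}]=0$ so that $X_i$ is independent of $W-\wh W_i$, and handling the diagonal term (a) and the quadratic remainder (c) with Lemmas \ref{l1}--\ref{l2}---is essentially the paper's (its terms $R_1,R_2,R_3$, with $\wh W_i$ the paper's $D_i$), and those parts are sound. The genuine gap is in step (b), which you correctly identify as the core. After writing (b) $=\sum_i\E[Q_i f'(W-\wh W_i)]$, your plan is a second, per-$i$ localisation: delete all edges meeting $v_{i2}$ and the $u_j$'s to get $W^{[i]}$ independent of $Q_i$, then expand $f'(W-\wh W_i)-f'(W^{[i]})$. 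But $W-\wh W_i-W^{[i]}$ is a sum over roughly $\deg(v_{i2})+\sum_{u\sim v_{i1}}\deg(u)$ edges, and only the triangle edges $(v_{i2},u_j)$ among these are controlled by Lemma \ref{l3}; the non-triangle part of this second neighbourhood is governed exactly by the degree sums you yourself concede are not controllable by $m$ alone. Comparing $f'$ at two different random arguments is inherently a $\Vert f''\Vert$-sized estimate, and the natural per-$i$ Cauchy--Schwarz bound $\sum_i(\E Q_i^2)^{1/2}\,(\E(W-\wh W_i-W^{[i]})^2)^{1/2}$ is of order $\sqrt{n/c}$ for the complete graph $K_n$ (e.g.\ with $c\asymp\sqrt n$ it diverges like $n^{1/4}$), while the right-hand side of \eqref{p1-1} tends to zero there. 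Your claim that the leftover is ``a genuine fluctuation term that, being $\Vert f'\Vert$-sized, produces the $\Vert f'\Vert m^{-1/4}$ contribution'' has no mechanism behind it: with the argument of $f'$ varying with $i$ you cannot factor $f'$ out of the sum over $i$, and a per-$i$ $\Vert f'\Vert$-bound costs $\sum_i\E|Q_i|\lesssim m^{1/4}/\sqrt c$, which is far too large. The cancellation you need lives \emph{across} the edges $i$, and no per-$i$ decoupling can see it.

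The missing idea---and the paper's key move---is to forgo decoupling the off-diagonal term and instead split $f'(W-U\wh W_i)=f'(W)+[f'(W-U\wh W_i)-f'(W)]$. The bracket is a $\Vert f''\Vert$ term controlled by $\sum_i d_i\leq\sqrt2\,m^{3/2}$ alone (the paper's $R_3$), while the $f'(W)$ parts are collected into the single global quantity $R_4=\E\bigl[f'(W)\sum_i X_i(\wh W_i-X_i)\bigr]$, bounded by $\Vert f'\Vert\sqrt{\Var\bigl(\sum_i\sum_{j\in N_i\setminus\{i\}}X_iX_j\bigr)}$ via one Cauchy--Schwarz at the common argument $W$. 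That variance is small precisely because of the uncorrelatedness structure: $\Cov(X_iX_j,X_kX_l)=0$ unless $\{i,j\}=\{k,l\}$ or the edges form a triangle, so it is at most $2m^{-2}\sum_i d_i+6m^{-2}\#(\Delta)$, and Lemmas \ref{l2} and \ref{l3} give the $\Vert f'\Vert\,2\cdot2^{1/4}m^{-1/4}$ term in \eqref{p1-1}. Unless you replace your second localisation by some device of this kind (a mean-zero statistic summed over all $i$, with $f'$ evaluated at one common point), step (b) does not go through, and with it the universal bound fails.
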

\begin{proof}
For each edge $i=(v_{i1}, v_{i2})$ with $deg(v_{i1})\leq deg(v_{i2})$, define the neighborhood $N_i\subset \{1,\dots, m\}$ to be consisted of 
all the edges connects to $v_{i1}$.
Let
\be{
D_i=\sum_{j\in N_i} X_j,\quad   W_i=W-D_i.
}
Since the color of $v_{i1}$ is independent of $W_i$, we have $X_i$ is independent of $W_i$.
Therefore, by $\E X_i=0$, $\E X_i^2=1/m$, the Taylor expansion and adding and subtracting corresponding terms, we have
\besn{\label{12}
\E f'(W)-\E Wf(W)
&=\sum_{i=1}^m \E X_i^2 \E f'(W)  -\sum_{i=1}^m \E X_i[f(W)-f(W_i)]\\
&=\sum_{i=1}^m \E X_i^2 \E f'(W) -\sum_{i=1}^m \E X_i D_i f'(W-UD_i)\\
&=\sum_{i=1}^m \E X_i^2 \E f'(W) - \sum_{i=1}^m \E X_i^2 f'(W-UD_i) -\sum_{i=1}^m \E X_i (D_i-X_i) f'(W-UD_i)\\
&=:R_1-R_2-R_3-R_4
}
where $U$ is an independent random variable distributed uniformly in $[0,1]$ and
\be{
R_1=\sum_{i=1}^m \E X_i^2 \E f'(W) - \sum_{i=1}^m \E X_i^2 \E f'(W_i),
}
\be{
R_2=\sum_{i=1}^m \E X_i^2 [f'(W-UD_i)- f'(W_i)],
}
\be{
R_3=\sum_{i=1}^m \E X_i (D_i-X_i) [f'(W-UD_i)-f'(W)],
}
\be{
R_4= \E f'(W) \sum_{i=1}^m X_i(D_i-X_i).
}
First of all, by the Taylor expansion,
\be{
|R_1|\leq ||f''||\sum_{i=1}^m \E X_i^2 \E |D_i|\leq ||f''||\big( \sum_{i=1}^m \E |X_i|^3 
+\sum_{i=1}^m \sum_{j\in N_i\backslash \{i\}} \E |X_i|^2 \E |X_j|  \big).
}
By \eq{l1-1} and \eq{l2-1},
\be{
\sum_{i=1}^m \E |X_i|^3 \leq \sqrt{\frac{c}{m}},
\quad
\sum_{i=1}^m \sum_{j\in N_i\backslash \{i\}} \E |X_i|^2 \E |X_j| \leq \frac{1}{m}\frac{2}{\sqrt{mc}} \sum_{i=1}^m d_i\leq \frac{2\sqrt{2}}{\sqrt{c}}.
}
Therefore,
\be{
|R_1|\leq ||f''||(\sqrt{\frac{c}{m}}+\frac{2\sqrt{2}}{\sqrt{c}}).
}
By the same argument and the fact that $\{X_j: j\in N_i\}$ are jointly independent,
\be{
|R_2|\leq \frac{1}{2}||f''||(\sqrt{\frac{c}{m}}+\frac{2\sqrt{2}}{\sqrt{c}}).
}
For $R_3$, by the Taylor expansion,
\be{
|R_3|\leq \frac{1}{2}||f''||\sum_{i=1}^m \E |X_i| |D_i-X_i| |D_i|
\leq \frac{1}{2}||f''||\sum_{i=1}^m \E X_i^2 |D_i-X_i|+\frac{1}{2}||f''||\sum_{i=1}^m \E|X_i| |D_i-X_i|^2.
}
Again by \eq{l1-1} and \eq{l2-1} and the fact that $\{X_j: j\in N_i\}$ are jointly independent,
\be{
\sum_{i=1}^m \E X_i^2 |D_i-X_i| \leq \frac{1}{m} \frac{2}{\sqrt{mc}}\sum_{i=1}^m d_i \leq \frac{2\sqrt{2}}{\sqrt{c}},
}
\be{
\sum_{i=1}^m \E|X_i| |D_i-X_i|^2\leq \frac{2}{\sqrt{mc}} \frac{1}{m} \sum_{i=1}^m d_i\leq \frac{2\sqrt{2}}{\sqrt{c}}.
}
Therefore,
\be{
|R_3|\leq ||f''||\frac{2\sqrt{2}}{\sqrt{c}}.
}
Finallly we bound $|R_4|$. By the Cauchy-schwartz inequality, the fact that $\{X_j: j\in N_i\}$ are jointly independent and $\E X_i=0$,
\be{
|R_4|\leq ||f'||\sqrt{\Var(\sum_{i=1}^m X_i (D_i-X_i))}=|f'||\sqrt{\Var(\sum_{i=1}^m \sum_{j\in N_i\backslash \{i\}}X_i X_j)}.
}
Observe that if $j\in N_i\backslash \{i\}$ and $l\in N_k\backslash \{k\}$, $\Cov(X_i X_j, X_k X_l)=0$ unless $\{i,j\}=\{k,l\}$ or $\{i,j,k,l\}$ forms a triangle.
For the case $\{i,j\}=\{k,l\}$,
\be{
\Cov(X_i X_j, X_i X_j)=\frac{1}{m^2}.
}
For the case $\{i,j,k,l\}$ forms a triangle, with distinct $i,j,k$,
\be{
\Cov(X_i X_j, X_j X_k)=\E X_i X_j^2 X_k \leq \frac{1}{m^2}
}
where the last inequality is by straightforward calculation. Therefore, by \eq{l2-1}, Lemma \ref{l3}, and observing that each triangle in $G$ gives rise to $3$ ordered pairs of $(i,j)$ such that $j\in N_i\backslash \{i\}$, we have,
\be{
|R_4|\leq ||f'||\sqrt{2\sum_{i=1}^m \sum_{j\in N_i\backslash \{i\}} \frac{1}{m^2} +\frac{3\times 2}{m^2} \#(\Delta)}\leq 
||f'|| \frac{2\cdot 2^{1/4}}{m^{1/4}}.
}
The bound \eq{p1-1} follows from \eq{12} and the bounds on $|R_1|-|R_4|$.
\end{proof}

\begin{proof}[Proof of Theorem \ref{t2}]
By the definition of Wasserstein distance, we have
\be{
d_{W}(\mathcal{L}(W), N(0,1))=\sup_{||h'||\leq 1} |\E h(W)-\E h(Z)|.
}
where $Z$ is a standard Gaussian random variable.
Let $f_h$ be the solution to
\be{
f'(w)-wf(w)=h(w)-\E h(Z).
}
Replacing $w$ by $W$ and taking expectation on both sides of the above equation, we have
\ben{\label{15}
d_{W}(\mathcal{L}(W), N(0,1))=\sup_{||h'||\leq 1} |\E f_h'(W)-\E W f(W)|.
}
If $||h'||\leq 1$, then
it is know that (c.f. (2.14) of \cite{Ra04}, \cite{Do12})
\be{
||f_h'||\leq \sqrt{\frac{2}{\pi}},\quad ||f_h''||\leq 1.
}
The bound \eq{t2-1} is proved by \eq{15} and applying the above bounds in \eq{p1-1}.
\end{proof}

\begin{remark}\label{r1}
The following bound can be obtained following the proof of Theorem \ref{t2}:
\be{
d_{W}(\mathcal{L}(W), N(0,1))\leq C_0 (\sqrt{\frac{c}{m}}+\frac{K_m}{\sqrt{c}m^{3/2}}+\frac{1}{m^{1/4}})
}
where $C_0$ is an absolute constant, $K_m=\sum_{i=1}^m d_i$ and $d_i$ is defined in Lemma \ref{l2}.
For fixed $c$, the above error bound goes to zero if $m\to \infty$ and $K_m\ll m^{3/2}$.
This rules out complete graphs where $K_m\sim m^{3/2}$. Proposition 6.1 of \cite{BhDiMu13} gives this counter-example.
\end{remark}

\section*{Acknowledgments}

We thank the authors of the paper \cite{BhDiMu13} for suggesting using Stein's method to give an alternative proof of their result. This work was supported by the NUS-Overseas Postdoctoral Fellowship from the National University of Singapore.

\setlength{\bibsep}{0.5ex}
\def\bibfont{\small}

% \bibliographystyle{mynatbib}
% \bibliography{literatur}

\begin{thebibliography}{22}
\providecommand{\natexlab}[1]{#1}
\providecommand{\url}[1]{\texttt{#1}}
\expandafter\ifx\csname urlstyle\endcsname\relax
  \providecommand{\doi}[1]{doi: #1}\else
  \providecommand{\doi}{doi: \begingroup \urlstyle{rm}\Url}\fi
  
  
\bibitem[Arratia, Goldstein and Gordon(1990)]{ArGoGo90}
R.~Arratia, L.~Goldstein and L.~Gordon (1990).
\newblock Poisson approximation and the Chen-Stein method. With comments and a rejoinder by the authors.
\newblock \emph{Statist. Sci.} \textbf{5}, \penalty0 403--434.

\bibitem[Barbour and Chen(2005)]{BaCh05}
A.~D. Barbour and L.~H.~Y. Chen (2005). 
\newblock  \emph{An Introduction to
Stein's method}. Lecture Notes Series {\bf 4}, Institute for
Mathematical Sciences, National University of Singapore, Singapore
University Press and World Scientific.

\bibitem[Barbour, Holst and Janson(1992)]{BaHoJa92}
A.~D. Barbour, L.~Holst and S.~Janson (1992).
\newblock \emph{Poisson approximation.}
\newblock Oxford Studies in Probability, \textbf{2}. Oxford Science Publications.
  
\bibitem[Bhattacharya, Diaconis and Mukherjee(2013)]{BhDiMu13}
B.~Bhattacharya, P.~Diaconis and S.~Mukherjee (2013).
\newblock Universal Poisson and normal limit theorems in graph coloring problems with connections to extremal combinatorics.
\newblock \emph{Preprint}. Available at \url{http://arxiv.org/abs/1310.2336}
  
\bibitem[Cerquetti and Fortini(2006)]{CeFo06}
A.~Cerquetti and S.~Fortini (2006).
\newblock A Poisson approximation for colored graphs under exchangeability.
\newblock \emph{Sankhya: The Indian Journal of Statistics} \textbf{68}, \penalty0 183--197.

\bibitem[Chatterjee, Diaconis and Meckes(2005)]{ChDiMe05}
S.~Chatterjee, P.~Diaconis and E.~Meckes (2005).
\newblock Exchangeable pairs and Poisson approximation.
\newblock \emph{Probab. Surv.} \textbf{2}, \penalty0 64--106.

\bibitem[Chen(1975)]{Ch75}
L.~H.~Y. Chen (1975).
\newblock Poisson approximation for dependent trials.
\newblock \emph{Ann. Probab.} \textbf{3}, \penalty0 535--545.

\bibitem[D\"obler(2012)]{Do12}
C.~D\"obler (2012).
\newblock Stein's method of exchangeable pairs for absolutely continuous, univariate distributions with applications to the Polya urn model.
\newblock \emph{Preprint}. Available at \url{http://arxiv.org/abs/1207.0533}

\bibitem[Fang and R\"ollin(2014)]{FaRo14}
X.~Fang and A.~R\"ollin (2014).
\newblock Rates of convergence for multivariate normal approximation with applications to dense graphs and doubly indexed permutation statistics.
\newblock \emph{Bernoulli}. To appear.

\bibitem[Janson and Nowicki(1991)]{JaNo91}
S.~Janson and K.~Nowicki (1991).
\newblock The asymptotic distributions of generalized {$U$}-statistics with
  applications to random graphs.
\newblock \emph{Probab. Theory Related Fields} \textbf{90}, \penalty0 341--375.

\bibitem[Rai\v{c}(2004)]{Ra04}
M.~Rai\v{c} (2004).
\newblock A multivariate CLT for decomposable random vectors with finite second moments.
\newblock \emph{J. Theoret. Probab.} \textbf{17}, \penalty0 573--603.

\bibitem[Stein(1972)]{St72}
C.~Stein (1972).
\newblock A bound for the error in the normal approximation to the distribution of a sum of dependent random variables.
\newblock \emph{Proc. Sixth Berkeley Symp. Math. Stat. Prob.}
\textbf{2} Univ. California Press. Berkeley, Calif., 583-602.

\end{thebibliography}

\end{document}